\theoremstyle{plain}
\newtheorem{thm}{Theorem}[section]
\newtheorem{theorem}[thm]{Theorem}
\newtheorem{proposition}[thm]{Proposition}
\newtheorem{lemma}[thm]{Lemma}
\newtheorem{corollary}[thm]{Corollary}
\theoremstyle{definition}
\newtheorem{definition}[thm]{Definition}
\newtheorem{remark}[thm]{Remark}
\newcommand{\be}{\boldsymbol{e}}
\newcommand{\bk}{\boldsymbol{k}}
\newcommand{\bl}{\boldsymbol{l}}
\newcommand{\bQ}{\mathbb{Q}}
\newcommand{\bR}{\mathbb{R}}
\newcommand{\jump}[1]{\ensuremath{[\![#1]\!]}}
\title{The connector for Double Ohno relation}
\author{Minoru Hirose, Nobuo Sato and Shin-ichiro Seki}
\address{Faculty of mathematics, Kyushu university 744, Motooka, Nishi-ku, Fukuoka, 819-0395, Japan}
\email{m-hirose@math.kyushu-u.ac.jp}
\address{Faculty of mathematics, Kyushu university 744, Motooka, Nishi-ku, Fukuoka, 819-0395, Japan}
\email{n-sato@math.kyushu-u.ac.jp}
\address{Mathematical Institute, Tohoku University, 6-3, Aoba, Aramaki, Aoba-Ku,
Sendai, 980-8578, Japan}
\email{shinichiro.seki.b3@tohoku.ac.jp}
\thanks{This work was supported by JSPS KAKENHI Grant Numbers JP18J00982, JP18K13392, JP19J00835, and JP18J00151.}
\subjclass[2010]{11M32, 11B65.}
\keywords{Multiple zeta values, connector, connected sum, Ohno relation, Double Ohno relation.}
\begin{document}
\begin{abstract}
In this paper, we introduce a new connector which generalizes the connector found by the third author and Yamamoto.
The new connector gives a direct proof of the double Ohno relation recently proved by the first author, the second author, Murahara, and Onozuka.
Furthermore, we obtain a simultaneous generalization of the ($q$-)Ohno relation and the ($q$-)double Ohno relation.
\end{abstract}
\maketitle
\section{Introduction}
Ohno \cite{O} proved the \emph{Ohno relation} which is a family of $\bQ$-linear relations among multiple zeta values stated as
\[
\sum_{|\be|=e}\zeta(\bk+\be)=\sum_{|\be'|=e}\zeta(\bk^{\dagger}+\be')
\]
for each admissible index $\bk$ and each non-negative integer $e$.
Here, $\bk^{\dagger}$ is the dual index of $\bk$ and $\sum_{|\be|=e}$ (resp.~$\sum_{|\be'|=e}$) means that $\be$ (resp.~$\be'$) runs over $r$-tuples (resp.~$r'$-tuples) of non-negative integers whose sum of all components equals $e$ when $\bk$ (resp.~$\bk^{\dagger}$) is an $r$-tuple (resp.~an $r'$-tuple).

In \cite{HMOS}, the authors proved the \emph{double Ohno relation} which is a double version of the Ohno relation, i.e.,
\[
\sum_{\substack{|\be_1|=e_1 \\ |\be_2|=e_2}}\zeta(\bk+\be_1+\be_2)=\sum_{\substack{|\be_1'|=e_1 \\ |\be_2'|=e_2}}\zeta(\bk^{\dagger}+\be_1'+\be_2')
\]
for certain specific $\bk$'s called BBBL-type indices and any non-negative integers $e_1,e_2$ by assuming the Ohno relation.
Although the double Ohno relation gives the original single version by setting either $e_1$ or $e_2$ to be zero, it does not recover the Ohno relation in full generality since its index $\bk$ is restricted.

The purpose of this article is to prove an identity which generalizes both the single and the double Ohno relation.
Our proof of the identity makes use of the technique called the \emph{connected sum} method, which was originally invented and developed by the third author and Yamamoto in their paper \cite{SY}.
They exhibit an ingenious simple proof of the Ohno relation (as well as its $q$-analog).
Our proof enhances theirs and at the same time provides a direct proof of the double Ohno relation that does not depend on the Ohno relation.
The identity also gives an explanation why the double Ohno relation holds only for restricted indices.

Our method is also applicable to a certain type of $q$-multiple zeta values and the case of usual multiple zeta values is recovered by taking $q\to1$.
Therefore, we discuss everything under the $q$-setting, and sometimes omit $q$ from our notation.

Let $q$ be a real number satisfying $0<q<1$.
For a positive integer $m$, the \emph{$q$-integer} $[m]$ is defined to be $\frac{1-q^m}{1-q}$.
In this paper, we call a non-empty tuple of positive integers $\bk=(k_1,\dots,k_r)$ with $k_r\geq 2$ an \emph{admissible index} and we define the $q$-multiple zeta value $\zeta_q(\bk)$ for such an index by the following convergent series:
\[
\zeta_q(\bk)\coloneqq\sum_{0<m_1<\cdots<m_r}\frac{q^{(k_1-1)m_1+\cdots+(k_r-1)m_r}}{[m_1]^{k_1}\cdots [m_r]^{k_r}}.
\]
In order to state our identity, it is convenient to introduce the generating function of double Ohno sums.
Here, the \emph{double Ohno sum} is defined by
\[
O_{e_1,e_2}(\bk)\coloneqq\sum_{\substack{|\be_1|=e_1 \\ |\be_2|=e_2}}\zeta_q(\bk+\be_1+\be_2)
\]
for an admissible index $\bk$ and non-negative integers $e_1,e_2$.
Throughout this paper, we fix non-negative real numbers $\xi,\eta$ which are sufficiently small depending only on $q$. Then, the desired generating function is
\[
O(\bk)=O_q(\bk;\xi,\eta)\coloneqq\sum_{0<m_1<\cdots<m_r}\prod_{i=1}^r\frac{q^{(k_i-1)m_i}}{([m_i]-q^{m_i}\xi)([m_i]-q^{m_i}\eta)[m_i]^{k_i-2}}.
\]
Since $\frac{1}{[m]-q^m\xi}\leq\frac{1}{1-\xi}\cdot\frac{1}{[m]}$ and $\frac{1}{[m]-q^m\eta}\leq\frac{1}{1-\eta}\cdot\frac{1}{[m]}$ for each positive integer $m$, we see that
\[
O(\bk)\leq\left(\frac{1}{(1-\xi)(1-\eta)}\right)^r\zeta_q(\bk)
\]
and $O(\bk)$ is convergent.
By using the geometric series expansion, one may easily check that
\[
O(\bk)=\sum_{e_1,e_2\geq0}O_{e_1,e_2}(\bk)\xi^{e_1}\eta^{e_2}
\]
(cf.~\cite[Proof of Theorem~1.2]{SY}).
Here, we recall the definition of the dual index.
When a given admissible index $\bk$ is uniquely expressed as
\[
\bk=(\{1\}^{a_1-1},b_1+1,\dots,\{1\}^{a_s-1},b_s+1)
\]
by using positive integers $a_1,\dots,a_s,b_1,\dots,b_s$ ($\{1\}^a$ means $\overbrace{1,\dots,1}^a$), then the dual $\bk^{\dagger}$ is obtained as
\[
\bk^{\dagger}=(\{1\}^{b_s-1},a_s+1,\dots,\{1\}^{b_1-1},a_1+1).
\]
Furthermore, we call an admissible index $\bk$ \emph{BBBL-type} after \cite[(16)]{BBBL} when
\[
\bk=(\{2\}^{n_0},1,\{2\}^{n_1},3,\dots,\{2\}^{n_{2d-2}},1,\{2\}^{n_{2d-1}},3,\{2\}^{n_{2d}})
\]
for some non-negative integers $d,n_0,\dots,n_{2d}$.
Note that if $\bk$ is of BBBL-type, then $\bk^{\dagger}$ is also of BBBL-type.
\begin{theorem}[Double Ohno relation \cite{HMOS}]\label{thm:genDOR}
Let $\bk$ be an index of BBBL-type. Then, we have
\[
O(\bk)=O(\bk^{\dagger}).
\]
\end{theorem}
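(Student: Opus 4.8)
The plan is to prove the identity by the connected sum method, upgrading the connector used by Seki and Yamamoto for the single Ohno relation to one carrying the two parameters $\xi,\eta$ simultaneously. First I would fix a connector $C(m,n)=C_q(m,n;\xi,\eta)$, a function of two positive integers, and form the connected sum
\[
Z(\bk;\bl)\coloneqq\sum_{\substack{0<m_1<\cdots<m_r \\ 0<n_1<\cdots<n_s}}\left(\prod_{i=1}^r w_{k_i}(m_i)\right)C(m_r,n_s)\left(\prod_{j=1}^s w_{l_j}(n_j)\right),
\]
where $w_k(m)=\dfrac{q^{(k-1)m}}{([m]-q^m\xi)([m]-q^m\eta)[m]^{k-2}}$ is precisely the weight appearing in the definition of $O$, the index $\bk=(k_1,\dots,k_r)$ sits on the left and $\bl=(l_1,\dots,l_s)$ on the right, and either side may be empty. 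The connector must be pinned down so that it degenerates correctly at the boundary and satisfies the transport relations below; determining its exact form (the analogue of the $q$-binomial factor in \cite{SY}, now coupling $\xi$ and $\eta$) is the first nontrivial task.

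Next I would establish the boundary condition. When the right index is empty the inner sum collapses and the connector specializes (e.g.\ $C(m,0)=1$), so that $Z(\bk;\varnothing)=O(\bk)$, and symmetrically $Z(\varnothing;\bl)=O(\bl)$. Once the form of $C$ is chosen this is a direct verification, and it is what ties the abstract connected sum back to the generating functions in the statement.

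The heart of the argument is a family of \emph{transport relations}, identities of the shape $Z(\bk;\bl)=Z(\bk';\bl')$ in which one elementary move strips a symbol from the left index and grafts the dual symbol onto the right index. I would prove each such relation by summing the connector against the weights in a single variable and re-expanding the result, via a partial-fraction or telescoping identity for the $q$-factors, as the summand of the target connected sum. Since a BBBL-type index is built from blocks $\{2\}^n$ separated by the \emph{paired} symbols $1$ and $3$, the moves I need are one that carries a $2$ across and moves that carry a $1$ and a $3$. This is the step I expect to be the main obstacle: verifying that the two-parameter connector turns each move into an exact identity, and checking that no other index shape admits a compatible chain of moves. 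The decisive feature is that the presence of \emph{both} poles $[m]-q^m\xi$ and $[m]-q^m\eta$ forces the $1$'s and $3$'s to be transported in matched pairs, so a symbol outside the BBBL pattern breaks the telescoping; this is exactly why the restriction to BBBL-type indices appears. (As a sanity check, setting $\eta=0$ removes one pole, recovers the single-parameter connected sum of \cite{SY}, and lifts the restriction, matching the fact that the ordinary Ohno relation holds for all admissible indices.)

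Finally, because $\bk^\dagger$ is again of BBBL-type and the duality of such indices is realized by reversing and complementing the block-and-separator pattern, I would chain the transport relations to migrate the whole index from left to right, carrying $(\bk;\varnothing)$ to $(\varnothing;\bk^\dagger)$ one elementary move at a time. Combining this chain with the two boundary evaluations yields
\[
O(\bk)=Z(\bk;\varnothing)=Z(\varnothing;\bk^\dagger)=O(\bk^\dagger),
\]
which is the desired double Ohno relation.
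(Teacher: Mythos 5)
Your overall architecture --- a two-parameter connector, transport relations, boundary evaluations, and a chain of moves carrying $(\bk;\varnothing)$ to $(\varnothing;\bk^{\dagger})$ --- is exactly the paper's, and your instinct about matched pairs is correct: in the paper the single-arrow transport relations are \emph{not} exact but carry correction terms $\pm\xi\eta\cdot Z(\cdots)$, and only the composite two-arrow moves $Z(\bk_{\to\uparrow};\bl)=Z(\bk;\bl_{\to\uparrow})$ and $Z(\bk_{\uparrow\to};\bl)=Z(\bk;\bl_{\uparrow\to})$, in which the $\xi\eta$ terms cancel, are exact identities. This is precisely why only BBBL-type indices (which decompose into such two-arrow blocks) can be migrated across, so that part of your plan matches Corollary~2.6 of the paper.

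The genuine gap is your boundary condition. You posit $C(m,0)=1$ and $Z(\bk;\varnothing)=O(\bk)$, but no connector compatible with the transport relations can satisfy this: the telescoping identity that drives a transport fixes the ratio $c(a-1,n)/c(a,n)$ and hence determines the connector up to an overall constant, and the resulting value $c(m,0)=[m;\xi][m;\eta]/\bigl([m]!\,[m;\gamma]\bigr)$ with $\gamma=\xi+\eta+(1-q)\xi\eta$ genuinely depends on $m$ when both $\xi,\eta>0$, so no constant rescaling makes it $1$. The paper flags this exact point: unlike the one-parameter case of Seki--Yamamoto, $Z(\bk;\varnothing)$ is \emph{not} $O(\bk)$ here, and the step you call ``a direct verification'' is instead the content of Proposition~3.1. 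One evaluates the connected sum one step before the end, namely $Z(\bk;(1))$, by summing the connector in the last variable; that sum is a ${}_2\phi_1$ basic hypergeometric series, evaluated in closed form by the $q$-Gauss summation of Jacobi and Heine, yielding $Z(\bk;(1))=c_{\xi,\eta}^{-1}\,O(\bk_{\uparrow})$ with a constant $c_{\xi,\eta}$ (a ratio of infinite $q$-Pochhammer symbols) independent of $\bk$. Since the same constant appears at both ends of the transport chain, it cancels and the theorem follows. So your plan is repairable, but the missing $q$-Gauss evaluation is an essential ingredient of the proof, not a routine check of a conveniently normalized connector.
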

Next, we state our new identity.
For the convenience of the description, we identify an admissible index $(k_1,\dots,k_r)$ and a word $yx^{k_1-1}\cdots yx^{k_r-1}\in y\bQ\langle x,y\rangle x$, where $\bQ\langle x,y\rangle$ is the non-commutative polynomial ring in two variables $x,y$.
By this identification and the $\bQ$-linearity, we understand $O( \ )$ as a map from $y\bQ\langle x,y\rangle x$ to $\bR$.
Further, for an indeterminate $\lambda$, we set
\begin{align*}
\tau_{\lambda}(x)&\coloneqq(1+yx\lambda)^{-1}y=y(1+xy\lambda)^{-1},\\
\tau_{\lambda}(y)&\coloneqq x(1+yx\lambda)=(1+xy\lambda)x
\end{align*}
as elements of the formal power series ring $\bQ\langle x,y\rangle\jump{\lambda}$.
We define $A$ to be the subring of $\bQ\langle x,y\rangle\jump{\lambda}$ generated by $x,y,\lambda$ and $(1+yx\lambda)^{-1}$, and extend $\tau_{\lambda}$ to be the $\bQ$-anti-automorphism of $A$ by $\tau_{\lambda}(\lambda)=\lambda$.
Note that $\tau_{\lambda}((1+yx\lambda)^{-1})=(1+yx\lambda)^{-1}$.
Finally, for $w=\sum_{i=0}^{\infty}w_i\lambda^i\in A$, we extend $O( \ )$ as
\[
O(ywx)\coloneqq\sum_{i=0}^{\infty}O(yw_ix)\xi^i\eta^i.
\]
\begin{theorem}[Extended double Ohno relation]\label{thm:main}
For any $w\in A$, the series $O(ywx)$ is absolutely convergent and we have
\[
O(ywx)=O(y\tau_{\lambda}(w)x).
\]
\end{theorem}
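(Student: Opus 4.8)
The plan is to use the connected sum method following the strategy of Seki–Yamamoto, which is clearly the intended approach given the paper's title and abstract. The core idea is to introduce a "connector" — a bilinear expression $C(\text{left index}, \text{right index})$ depending on $\xi, \eta, \lambda$ — that glues together two $q$-multiple-zeta-type sums. One then defines a connected sum $Z(\mathbf{k}; \mathbf{l})$ in which $\mathbf{k}$ is processed "from the left" and $\mathbf{l}$ "from the right," with the two halves joined by the connector. The whole theorem will follow from a single local identity: a "transport relation" showing that moving one letter ($x$ or $y$) across the connector from the right-hand index to the left-hand index leaves the connected sum invariant, at the cost of applying the operators that define $\tau_\lambda$.

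I should first pin down the connector explicitly. Since $O(\mathbf{k})$ has the two factors $([m_i]-q^{m_i}\xi)$ and $([m_i]-q^{m_i}\eta)$ in each denominator, I expect the connector to be a product/ratio of $q$-factorial-type quantities in the two summation variables $m$ (for $\mathbf{k}$) and $n$ (for $\mathbf{l}$), deformed by $\xi$, $\eta$, and the bookkeeping parameter $\lambda$ — generalizing the single-Ohno connector of \cite{SY}, which is the point the abstract emphasizes. The first task is therefore to write down $C_{m,n}$ so that (i) at the two boundary cases it degenerates to $O(\mathbf{k})$ on one side and to $O(\mathbf{l}^{\dagger})$ on the other, i.e.\ $Z(\mathbf{k}; \varnothing)$ recovers $O(y w x)$ and $Z(\varnothing; \mathbf{k})$ recovers $O(y\tau_\lambda(w)x)$, and (ii) it satisfies the transport relation. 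Getting $\tau_\lambda$ to fall out of (ii) — in particular reproducing the definitions $\tau_\lambda(x) = (1+yx\lambda)^{-1}y$ and $\tau_\lambda(y) = x(1+yx\lambda)$, with the inverse factor tracking exactly how the $\lambda$-weighted geometric series in the connector gets reindexed — is the heart of the matter.

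With the connector in hand, the proof proceeds in the following steps. First I would verify absolute convergence of $Z(\mathbf{k};\mathbf{l})$ uniformly in the relevant range, using the same kind of bound $\tfrac{1}{[m]-q^m\xi}\le\tfrac{1}{1-\xi}\cdot\tfrac{1}{[m]}$ already recorded in the excerpt, together with a geometric estimate on the connector in $\lambda$; this simultaneously establishes the absolute convergence of $O(ywx)$ asserted in the statement. Second, I would prove the two boundary identities that identify $Z$ with $O(ywx)$ and $O(y\tau_\lambda(w)x)$. Third — the main computational lemma — I would prove the transport relation: peeling one letter from the right index and reattaching it on the left reindexes the sum via an Abel-summation / telescoping manipulation in the variable $n$ (for the $x$-letter, raising a $q$-power; for the $y$-letter, shifting the outermost summation variable), and these two elementary moves are exactly intertwined by $\tau_\lambda$ on the word level. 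Iterating the transport relation letter by letter transports all of $\mathbf{l}=y w x$ across, turning it into $y\tau_\lambda(w)x$, and the theorem follows by comparing the two boundary evaluations. I should also note that the BBBL restriction in Theorem~\ref{thm:genDOR} is \emph{not} needed here: Theorem~\ref{thm:main} holds for all $w\in A$, and Theorem~\ref{thm:genDOR} should be recovered as the special case $w = x^{a-1}\,\cdots$ corresponding to a dualizable word, with the BBBL condition being exactly what is needed to kill the $\lambda$-dependent correction terms so that $\tau_\lambda$ collapses to the ordinary duality $\dagger$.

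**The hard part will be** discovering the precise form of the connector and verifying the transport relation, especially matching the noncommutative operators $\tau_\lambda(x)=(1+yx\lambda)^{-1}y$ and $\tau_\lambda(y)=x(1+yx\lambda)$ to concrete summation identities; the appearance of the inverse $(1+yx\lambda)^{-1}$ — equivalently an infinite geometric series in $\lambda$ — means the single-step transport will itself produce an infinite series in $\lambda$ that must be resummed, and ensuring that this resummation is compatible with the $\xi^i\eta^i$ weighting in the extended definition $O(ywx)=\sum_i O(y w_i x)\xi^i\eta^i$ is the delicate bookkeeping I expect to consume most of the effort. Establishing that $\tau_\lambda$ is a well-defined $\bQ$-anti-automorphism of $A$ fixing $(1+yx\lambda)^{-1}$ (as asserted in the excerpt) will be a prerequisite I would check before running the induction.
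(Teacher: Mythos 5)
Your plan reproduces the paper's architecture faithfully: a connector joining two Ohno-type sums, transport relations that move letters across it, boundary evaluations, and iteration; you even anticipate correctly that the $x$-move requires resumming an infinite geometric series in $\lambda$ (this is exactly how $(1+yx\lambda)^{-1}y$ arises in the paper's Lemma~\ref{lem:rephrase}, under the smallness hypothesis $\xi\eta\cdot O((2))<1$), that the reformulated transport relation should read $Z(wu;w')=Z(w;w'\tau_{\lambda}(u))$, and that the double Ohno relation is the restriction to $w\in\bQ\langle xy,yx\rangle$ because $\tau_{\lambda}$ fixes $xy$ and $yx$. But the proposal is a roadmap whose two load-bearing components are left open, and one of your concrete structural guesses is wrong in a way the paper explicitly warns about.

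The genuine gap is your boundary step. You posit that $Z(\bk;\varnothing)$ recovers $O(ywx)$ and that the connector ``degenerates'' at the empty index; this is false in the two-parameter setting. The paper's connector is
\[
c(m,n)=\frac{q^{mn}[m;\xi][m;\eta][n;\xi][n;\eta]}{[m]!\,[n]!\,[m+n;\xi+\eta+(1-q)\xi\eta]},
\]
and $c(m,0)=\frac{[m;\xi][m;\eta]}{[m]!\,[m;\gamma]}$ with $\gamma=\xi+\eta+(1-q)\xi\eta$ is \emph{not} $1$ unless $\xi\eta=0$ --- the paper notes precisely that $Z(\bk;\varnothing)\neq O(\bk)$ in general, ``unlike the case of either $\xi=0$ or $\eta=0$,'' and that this is why Proposition~\ref{prop:initial} is needed. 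The correct initial value is $Z(\bk;(1))=c_{\xi,\eta}^{-1}\,O(\bk_{\uparrow})$, whose proof is a genuinely nontrivial input: summing over the connector's last variable produces a ${}_2\phi_1$ series that must be evaluated in closed form by the Jacobi--Heine $q$-Gauss summation, yielding the constant $c_{\xi,\eta}=\frac{(q;q)_{\infty}(q\xi'\eta';q)_{\infty}}{(q\xi';q)_{\infty}(q\eta';q)_{\infty}}$, which then cancels between $O(ywx)=c_{\xi,\eta}Z(yw;y)$ and $c_{\xi,\eta}Z(y;y\tau_{\lambda}(w))=O(y\tau_{\lambda}(w)x)$. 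Your step two, as stated, would fail, and nothing in the plan substitutes for this hypergeometric evaluation. A secondary inaccuracy: you expect the connector itself to depend on $\lambda$, but in the paper $\lambda$ never enters the connector; it is purely a bookkeeping variable whose powers are weighted by $(\xi\eta)^{i}$ in the extended definitions of $O$ and $Z$, while the single-step transport relations \eqref{eq:TR1}--\eqref{eq:TR2} carry explicit $\pm\xi\eta$ correction terms rather than being exact invariance at the level of plain indices. Since you yourself flag ``discovering the precise form of the connector'' as the hard part, the honest verdict is that the proposal identifies the right strategy but does not contain the two discoveries --- the connector with its $[m+n;\xi+\eta+(1-q)\xi\eta]$ denominator, and the $q$-Gauss initial value --- on which the paper's proof actually rests.
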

Since $\tau_{\lambda}(xy)=xy$ and $\tau_{\lambda}(yx)=yx$ by definition, we see that the restriction of Theorem~\ref{thm:main} to $w\in\bQ\langle xy,yx\rangle$ gives the double Ohno relation.
Also, by setting $\xi=0$ or $\eta=0$, Theorem~\ref{thm:main} reduces to the single Ohno relation for $q$-multiple zeta values, which was originally proved by Bradley in \cite{B}.
Moreover, our new identity can be used to describe the difference $O_{e_1,e_2}(\bk)-O_{e_1,e_2}(\bk^{\dagger})$ for an arbitrarily admissible index $\bk$.
\section{Connector, connected sum, and transport relations}
For a non-negative integer $a$ and a real number $\delta$, $[a;\delta]$ denotes $\prod_{i=1}^a([i]-q^i\delta)$ ($[0;\delta]$ means $1$).
We set $[a]!\coloneqq[a;0]$.
\begin{definition}[Connector]
For non-negative integers $m$ and $n$, we define a connector $c(m,n)$ by
\[
c(m,n)\coloneqq\frac{q^{mn}[m;\xi][m;\eta][n;\xi][n;\eta]}{[m]![n]![m+n;\xi+\eta+(1-q)\xi\eta]}.
\]
\end{definition}
From now on, we consider more general indices including non-admissible indices and the empty index.
Namely, for a tuple of positive integers $\bk=(k_1,\dots,k_r)$, we admit the case $k_r=1$ or $\bk=\varnothing$ (the empty tuple).
When $\bk=\varnothing$, we consider that $r$ called `the depth of $\bk$' is zero.
\begin{definition}[Connected sum]
Let $\bk=(k_1, \dots, k_r)$ and $\bl=(l_1, \dots, l_s)$ be indices.
Then, we define a (possibly divergent) series of positive terms
$Z(\bk;\bl)=Z_q(\bk;\bl;\xi,\eta)$ by
\begin{align*}
Z(\bk;\bl)
\coloneqq\sum_{\substack{0=m_0<m_1<\cdots<m_r \\ 0=n_0<n_1<\cdots<n_s}}
&\prod_{i=1}^r\frac{q^{(k_i-1)m_i}}{([m_i]-q^{m_i}\xi)([m_i]-q^{m_i}\eta)[m_i]^{k_i-2}}\\
\cdot c(m_r,n_s)\cdot&\prod_{j=1}^s\frac{q^{(l_j-1)n_j}}{([n_j]-q^{n_j}\xi)([n_j]-q^{n_j}\eta)[n_j]^{l_j-2}}.
\end{align*}
Here, an empty product is regarded as $1$.
\end{definition}
By definition, we see that there is a symmetry $Z(\bk;\bl)=Z(\bl;\bk)$.
If either $\xi=0$ or $\eta=0$, then $c(m,n)$ (resp.~$Z(\bk;\bl)$) is nothing but the connector (resp.~the connected sum) defined by the third author and Yamamoto in \cite{SY}.
Note that $Z(\bk;\varnothing)$ for an admissible index $\bk$ is not necessarily equal to $O(\bk)$ unlike the case of either $\xi=0$ or $\eta=0$.
This is the reason why we need Proposition~\ref{prop:initial} in the next section.
\begin{lemma}\label{lem:limit}
If $n$ is at least $1$, then
\[
\lim_{m\to\infty}c(m,n)=0.
\]
\end{lemma}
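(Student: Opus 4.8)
The plan is to isolate the purely geometric factor $q^{mn}$ and argue that all the remaining factors stay bounded as $m\to\infty$. Writing $\sigma\coloneqq\xi+\eta+(1-q)\xi\eta$ for the parameter appearing in the denominator, I would first regroup the connector as
\[
c(m,n)=q^{mn}\cdot\frac{[n;\xi][n;\eta]}{[n]!}\cdot\frac{[m;\xi][m;\eta]}{[m]!\,[m+n;\sigma]}.
\]
For a fixed $n\geq1$ the middle factor is a constant, and the leading factor $q^{mn}$ tends to $0$ because $0<q<1$. Hence it suffices to show that the last factor remains bounded (in fact convergent) as $m\to\infty$.

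To handle that last factor, I would peel off the finitely many tail terms by writing $[m+n;\sigma]=[m;\sigma]\prod_{i=m+1}^{m+n}([i]-q^i\sigma)$, so that
\[
\frac{[m;\xi][m;\eta]}{[m]!\,[m+n;\sigma]}=\left(\prod_{i=1}^m\frac{([i]-q^i\xi)([i]-q^i\eta)}{[i]([i]-q^i\sigma)}\right)\cdot\frac{1}{\prod_{i=m+1}^{m+n}([i]-q^i\sigma)}.
\]
Since $[i]=\frac{1-q^i}{1-q}\to\frac{1}{1-q}$ and $q^i\to0$, the tail product $\prod_{i=m+1}^{m+n}([i]-q^i\sigma)$ converges to $(1-q)^{-n}$, a finite positive constant. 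The real content is therefore the convergence of the infinite product $\prod_{i\geq1}\frac{([i]-q^i\xi)([i]-q^i\eta)}{[i]([i]-q^i\sigma)}$.

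The key computation---and the place where the precise shape of $\sigma$ matters---is the per-factor identity. Expanding, the numerator minus the denominator equals $q^{2i}\xi\eta+[i]q^i(1-q)\xi\eta=q^i\xi\eta\bigl(q^i+(1-q)[i]\bigr)$, and the defining relation $q^i+(1-q)[i]=1$ collapses this to exactly $q^i\xi\eta$. Thus each factor equals $1+\frac{q^i\xi\eta}{[i]([i]-q^i\sigma)}$, which is $1+O(q^i)$ once $\xi,\eta$ are small enough that the denominator is bounded below by a positive constant; hence the series $\sum_i\frac{q^i\xi\eta}{[i]([i]-q^i\sigma)}$ converges and so does the infinite product. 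Combining these observations, $c(m,n)$ is the product of $q^{mn}\to0$ with a bounded quantity, whence $\lim_{m\to\infty}c(m,n)=0$. I expect the boundedness step to be the main obstacle: everything hinges on recognizing the telescoping cancellation produced by $\sigma=\xi+\eta+(1-q)\xi\eta$, without which the factors would not so cleanly reduce to $1+O(q^i)$.
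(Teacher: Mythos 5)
Your proof is correct, but it takes a genuinely different and more refined route than the paper's, which is much blunter. The paper disposes of the lemma with crude two-sided bounds only: for $0\le\delta\le1$ and $i\ge2$ one has $[i-1]\le[i]-q^i\delta\le[i]$, hence $(1-q\delta)[a-1]!\le[a;\delta]\le[a]!$; applying $[m;\xi],[m;\eta]\le[m]!$ and $[m+n;\gamma]\ge(1-q\gamma)[m+n-1]!$ gives $c(m,n)\le\frac{q^{mn}[m]!}{(1-q\gamma)[m+n-1]!}\cdot\frac{[n;\xi][n;\eta]}{[n]!}\le Cq^{mn}$ with $C$ independent of $m$ (since $[m]!\le[m+n-1]!$ when $n\ge1$), and the lemma follows. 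In particular your closing expectation is off: no cancellation special to the shape $\sigma=\xi+\eta+(1-q)\xi\eta$ is needed for mere boundedness --- any $\sigma\in[0,1]$ would do. What your finer analysis buys is genuinely more. Your per-factor identity $([i]-q^i\xi)([i]-q^i\eta)-[i]([i]-q^i\sigma)=q^i\xi\eta$ is correct, and it is exactly the $n=0$ specialization of the identity $[a]\{[a+n]-q^{a+n}\sigma\}-q^n([a]-q^a\xi)([a]-q^a\eta)=[a][n]-q^{a+n}\xi\eta$ that drives the transport relations in Theorem~\ref{thm:transport}; with it you show that $[m;\xi][m;\eta]/([m]!\,[m+n;\sigma])$ actually \emph{converges} as $m\to\infty$ rather than merely staying bounded. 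Indeed, in $q$-Pochhammer notation your infinite product equals $\frac{(q\xi';q)_\infty(q\eta';q)_\infty}{(q;q)_\infty(q\xi'\eta';q)_\infty}=c_{\xi,\eta}^{-1}$, precisely the constant the paper derives only later, in Proposition~\ref{prop:initial}, via the $q$-Gauss summation; so you obtain the sharper asymptotic $c(m,n)/q^{mn}\to c_{\xi,\eta}^{-1}(1-q)^n\cdot\frac{[n;\xi][n;\eta]}{[n]!}$, at the cost of extra work where the paper's one-line estimate suffices. The one hypothesis you should state explicitly is the standing assumption that $\xi,\eta$ are small enough that $\sigma<1$, so that each factor $[i]-q^i\sigma$ is bounded below by $\min(1-q\sigma,[i-1])>0$; the paper uses the same assumption in the guise $\gamma<1$.
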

\begin{proof}
Note that for $0\leq\delta\leq 1$, we have $[i-1]\leq[i]-q^i\delta\leq[i]$ for $i\geq 2$, and thus $(1-q\delta)[a-1]!\leq[a;\delta]\leq[a]!$ for $a\geq 1$.
Therefore, by setting $\gamma\coloneqq\xi+\eta+(1-q)\xi\eta<1$,
\[
c(m,n)\leq \frac{q^{mn}[m]!}{(1-q\gamma)[m+n-1]!}\cdot\frac{[n;\xi][n;\eta]}{[n]!}\leq (\text{a const.~indep.~of $m$})\cdot q^{mn}\xrightarrow{m\to\infty}0.\qedhere
\]
\end{proof}
For positive integers $k_1,\dots, k_r$ and an index $\bk=(k_1,\dots,k_r)$, we use the following arrow notation:
\begin{align*}
\bk_{\to}&\coloneqq(k_1,\dots,k_r,1),\\
\bk_{\uparrow}&\coloneqq(k_1,\dots,k_{r-1},k_r+1),\\
\bk_{\to\uparrow}&\coloneqq(\bk_{\to})_{\uparrow}=(k_1,\dots,k_r,2),\\
\bk_{\uparrow\to}&\coloneqq(\bk_{\uparrow})_{\to}=(k_1,\dots,k_{r-1},k_r+1,1),\\
\bk_{\uparrow\to\uparrow}&\coloneqq((\bk_{\uparrow})_{\to})_{\uparrow},\quad \text{and so on}.
\end{align*}

We also use $\varnothing_{\to}=(1)$, but we do not define $\varnothing_{\uparrow}$.
\begin{theorem}[Transport relations for the extended double Ohno relation]\label{thm:transport}
Let $\bk$ and $\bl$ be $($possibly non-admissible or empty$)$ indices.
If $\bl$ is not empty, then we have
\begin{equation}\label{eq:TR1}
Z(\bk_{\to};\bl)=Z(\bk;\bl_{\uparrow})+\xi\eta\cdot Z(\bk_{\to\uparrow};\bl_{\uparrow}).
\end{equation}
If $\bk$ is not empty, then we have
\begin{equation}\label{eq:TR2}
Z(\bk_{\uparrow};\bl)=Z(\bk;\bl_{\to})-\xi\eta\cdot Z(\bk_{\uparrow};\bl_{\to\uparrow}).
\end{equation}
\end{theorem}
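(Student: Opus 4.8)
The plan is to prove the first relation \eqref{eq:TR1} directly by a telescoping argument in the final summation variable, and then to deduce the second relation \eqref{eq:TR2} formally from the first by means of the symmetry $Z(\bk;\bl)=Z(\bl;\bk)$. Indeed, applying \eqref{eq:TR1} to the pair $(\bl,\bk)$ (which is legitimate precisely when $\bk\neq\varnothing$) and then swapping the two arguments of every resulting $Z$ via the symmetry yields $Z(\bk;\bl_{\to})=Z(\bk_{\uparrow};\bl)+\xi\eta\,Z(\bk_{\uparrow};\bl_{\to\uparrow})$, which is exactly \eqref{eq:TR2} after rearranging. Thus the whole theorem reduces to \eqref{eq:TR1}.

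To prove \eqref{eq:TR1}, I would freeze every summation variable except the last $m$-variable, which I call $m$, writing $M\coloneqq m_r$ for its predecessor and $n\coloneqq n_s$ for the last $n$-variable; the frozen outer variables factor out and play no further role. In $Z(\bk_{\to};\bl)$ the new entry $k_{r+1}=1$ contributes the factor $\frac{[m]}{([m]-q^m\xi)([m]-q^m\eta)}$ together with the connector $c(m,n)$, so the core of the argument is the single-variable connector identity
\[
\frac{[m]\,c(m,n)}{([m]-q^m\xi)([m]-q^m\eta)}=\frac{q^{n}}{[n]}\bigl(c(m-1,n)-c(m,n)\bigr)+\xi\eta\,\frac{q^{n}}{[n]}\cdot\frac{q^{m}\,c(m,n)}{([m]-q^m\xi)([m]-q^m\eta)}.
\]
After cancelling $c(m,n)$ and clearing denominators, and using the elementary relations $[m+n]=[m]+q^m[n]$ and $(1-q)[m]+q^m=1$ together with the ratio $c(m-1,n)/c(m,n)=\frac{[m]([m+n]-q^{m+n}\gamma)}{q^{n}([m]-q^m\xi)([m]-q^m\eta)}$ (where $\gamma=\xi+\eta+(1-q)\xi\eta$), this collapses to the purely polynomial identity $[m]([m+n]-q^{m+n}\gamma)-q^{n}([m]-q^m\xi)([m]-q^m\eta)=[n][m]-q^{m+n}\xi\eta$, in which the precise shape of the connector — in particular the exponent $q^{mn}$ and the mixing parameter $\gamma$ — is exactly engineered to make the two sides agree.

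Granting this identity, I would sum over $m>M$. The first term on the right is a telescoping difference of $\frac{q^{n}}{[n]}c(m,n)$, so the sum collapses to the boundary value $\frac{q^{n}}{[n]}c(M,n)$, the limit at infinity vanishing by Lemma~\ref{lem:limit}; since $\frac{q^{n}}{[n]}=\frac{q^{n_s}}{[n_s]}$ is precisely the factor by which the last entry $l_s$ of $\bl$ is promoted to $l_s+1$, reattaching the frozen variables reconstitutes $Z(\bk;\bl_{\uparrow})$. The second term, carrying the factor $\frac{q^{m}}{([m]-q^m\xi)([m]-q^m\eta)}$ that corresponds to a final entry $k_{r+1}=2$ together with the same promotion of $l_s$, reconstitutes $\xi\eta\,Z(\bk_{\to\uparrow};\bl_{\uparrow})$. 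All terms in \eqref{eq:TR1} are nonnegative for small $\xi,\eta$, so the rearrangements and the interchange of summations are justified by Tonelli's theorem even when the series diverge. The only genuinely delicate point is the discovery and verification of the polynomial connector identity above; once it is in hand, the remainder is bookkeeping, the telescoping, and the vanishing boundary term supplied by Lemma~\ref{lem:limit}.
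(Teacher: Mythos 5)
Your proposal is correct and follows essentially the same route as the paper: your single-variable connector identity is just a rearrangement of the paper's key algebraic identity $[a]\{[a+n]-q^{a+n}\gamma\}-q^n([a]-q^a\xi)([a]-q^a\eta)=[a][n]-q^{a+n}\xi\eta$, summed telescopically with the boundary term at infinity killed by Lemma~\ref{lem:limit}, and you deduce \eqref{eq:TR2} from \eqref{eq:TR1} via the symmetry $Z(\bk;\bl)=Z(\bl;\bk)$ exactly as the paper does. Your explicit appeal to Tonelli's theorem for the rearrangements merely makes precise what the paper leaves implicit in working with series of positive terms.
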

\begin{proof}
Let $m,n$ be integers satisfying $m\geq 0, n\geq 1$.
Since we can easily check
\[
[a]\{[a+n]-q^{a+n}(\xi+\eta+(1-q)\xi\eta)\}-q^n([a]-q^a\xi)([a]-q^a\eta)=[a][n]-q^{a+n}\xi\eta,
\]
we have
\[
c(a-1,n)-c(a,n)=\frac{[a][n]-q^{a+n}\xi\eta}{q^n([a]-q^a\xi)([a]-q^a\eta)}\cdot c(a,n)
\]
for any positive integer $a$.
By taking the telescoping series, thanks to Lemma~\ref{lem:limit}, we obtain
\begin{equation}\label{eq:key}
\begin{split}
&\sum_{a=m+1}^{\infty}\frac{[a]}{([a]-q^a\xi)([a]-q^a\eta)}\cdot c(a,n)\\
&=\frac{q^n}{[n]}\cdot c(m,n)+\xi\eta\cdot\frac{q^n}{[n]}\sum_{a=m+1}^{\infty}\frac{q^a}{([a]-q^a\xi)([a]-q^a\eta)}\cdot c(a,n).
\end{split}
\end{equation}
This proves the first desired equality \eqref{eq:TR1}.
The second equality \eqref{eq:TR2} is equivalent to the first by the symmetry of the connected sum.
\end{proof}
\begin{remark}
Two series in \eqref{eq:key} converge.
We see this fact from a calculation in the proof of Proposition~\ref{prop:initial} below.
\end{remark}
\begin{proposition}\label{porp:conv}
Let $\bk$ and $\bl$ be indices.
Assume that one of the following conditions are satisfied$:$
\begin{enumerate}[$(i)$]
\item\label{item:non-empty} Both $\bk$ and $\bl$ are non-empty.
\item\label{item:empty} One of $\bk$ and $\bl$ is empty and the other is empty or admissible.
\end{enumerate}
Then, the connected sum $Z(\bk;\bl)$ converges.
\end{proposition}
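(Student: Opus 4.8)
The plan is to bound the (nonnegative) summand pointwise and then sum in a convenient order; this reordering is legitimate precisely because $Z(\bk;\bl)$ is a series of positive terms, so its total is a well-defined element of $[0,\infty]$. Two families of elementary estimates are the inputs. First, the inequalities $\frac{1}{[m]-q^m\xi}\le\frac{1}{(1-\xi)[m]}$ and $\frac{1}{[m]-q^m\eta}\le\frac{1}{(1-\eta)[m]}$ from the introduction let me replace the $i$-th and $j$-th factors of the summand by $C_0\,\frac{q^{(k_i-1)m_i}}{[m_i]^{k_i}}$ and $C_0\,\frac{q^{(l_j-1)n_j}}{[n_j]^{l_j}}$, where $C_0\coloneqq\frac{1}{(1-\xi)(1-\eta)}$. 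Second, the bounds $[a;\delta]\le[a]!$ and $[a;\delta]\ge(1-q\delta)[a-1]!$ for $0\le\delta\le1$, recorded in the proof of Lemma~\ref{lem:limit}, will control the connector.

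For the connector, writing $\gamma\coloneqq\xi+\eta+(1-q)\xi\eta<1$ and using $[m;\xi][m;\eta]\le[m]!^2$, $[n;\xi][n;\eta]\le[n]!^2$, $[m+n;\gamma]\ge(1-q\gamma)[m+n-1]!$, together with the $q$-binomial inequality $[m]![n]!\le[m+n]!$ (the $q$-binomial coefficient $[m+n]!/([m]![n]!)=\prod_{i=1}^m\frac{1-q^{n+i}}{1-q^i}\ge1$ for $0<q<1$), I get for $m,n\ge1$
\[
c(m,n)\le\frac{q^{mn}[m]![n]!}{(1-q\gamma)[m+n-1]!}=\frac{q^{mn}[m+n]}{1-q\gamma}\le\frac{q^{mn}}{(1-q)(1-q\gamma)},
\]
and similarly $c(0,n)\le\frac{[n]}{1-q\gamma}\le\frac{1}{(1-q)(1-q\gamma)}$ for $n\ge1$, while $c(0,0)=1$. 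Thus $c$ is bounded by a constant $C_1$ whenever one argument vanishes, and carries the decisive factor $q^{mn}$ when both arguments are positive.

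Case~(ii) then follows quickly. If both indices are empty, $Z(\varnothing;\varnothing)=c(0,0)=1$. If $\bk=\varnothing$ and $\bl$ is admissible, then $m_r=0$, the connector $c(0,n_s)\le C_1$ is bounded, and summing the remaining factors gives $Z(\varnothing;\bl)\le C_1 C_0^{\,s}\sum_{0<n_1<\cdots<n_s}\prod_j\frac{q^{(l_j-1)n_j}}{[n_j]^{l_j}}=C_1 C_0^{\,s}\,\zeta_q(\bl)<\infty$; this is exactly the point where admissibility ($l_s\ge2$) is needed, and the symmetry $Z(\bk;\bl)=Z(\bl;\bk)$ covers the mirror subcase. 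For Case~(i) I bound every tooth crudely by $\frac{q^{(k_i-1)m_i}}{[m_i]^{k_i}}\le1$ and $\frac{q^{(l_j-1)n_j}}{[n_j]^{l_j}}\le1$, so the summand is at most $C_0^{\,r+s}C_1\,q^{m_rn_s}$. Summing first over the inner indices for fixed $m_r,n_s$ produces the counts $\binom{m_r-1}{r-1}$ and $\binom{n_s-1}{s-1}$, whence
\[
Z(\bk;\bl)\le C_0^{\,r+s}C_1\sum_{m_r\ge1,\;n_s\ge1}\binom{m_r-1}{r-1}\binom{n_s-1}{s-1}q^{m_rn_s}.
\]
The remaining double sum converges because $q^{m_rn_s}$ decays superpolynomially: from $2m_rn_s\ge m_r+n_s$ for $m_r,n_s\ge1$ one has $q^{m_rn_s}\le q^{m_r/2}q^{n_s/2}$, and the sum factors into two convergent series of the shape $\sum_m\binom{m-1}{r-1}q^{m/2}$.

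I expect the only genuine subtlety to be the treatment of non-admissible indices. When $k_r=1$ (or some inner $k_i=1$), the individual teeth $1/[m_i]$ do not tend to zero, so naive term-by-term control fails. The resolution is structural rather than computational: in Case~(i) the connector always supplies the factor $q^{m_rn_s}$ with $m_r,n_s\ge1$, which absorbs the polynomial growth coming from bounding all teeth by $1$, whereas in Case~(ii) this factor degenerates to a constant and one must instead invoke admissibility of the non-empty index to obtain a convergent $\zeta_q$. Keeping this dichotomy clearly in view is the crux; the rest is the bookkeeping sketched above.
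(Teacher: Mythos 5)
Your proof is correct, but it takes a genuinely different route from the paper's. The paper first dominates $Z(\bk;\bl)$ by a constant multiple of $Y(\bk;\bl)\coloneqq Z_q(\bk;\bl;0,0)$ --- using exactly the estimates you invoke, namely $\frac{1}{[m]-q^m\delta}\le\frac{1}{(1-\delta)[m]}$, $[a;\delta]\le[a]!$ and $[a;\gamma]\ge(1-q\gamma)[a-1]!$ --- and then disposes of case (i) not by direct estimation but by \emph{transporting}: since $\xi\eta=0$ kills the correction terms, the transport relations for $Y$ hold as identities of positive (possibly infinite) series, so $Y(\bk;\bl)=\cdots=Y((1);\boldsymbol{h})=Y(\varnothing;\boldsymbol{h}_{\uparrow})=\zeta_q(\boldsymbol{h}_{\uparrow})<+\infty$ with $\boldsymbol{h}_{\uparrow}$ admissible; case (ii) is again just $Y(\varnothing;\bl)=\zeta_q(\bl)$. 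You instead stay with $Z$ itself: in case (i) you extract the uniform bound $c(m,n)\le\frac{q^{mn}}{(1-q)(1-q\gamma)}$ for $m,n\ge1$ via the $q$-binomial inequality $[m]![n]!\le[m+n]!$, bound every tooth by a constant, count the inner summation points by $\binom{m_r-1}{r-1}\binom{n_s-1}{s-1}$, and beat the resulting polynomial growth with $q^{m_rn_s}\le q^{m_r/2}q^{n_s/2}$; all steps check out (positivity licenses the Tonelli-type reordering, $2mn\ge m+n$ for $m,n\ge1$ is correct, and the symmetry $Z(\bk;\bl)=Z(\bl;\bk)$ covers the mirror subcase of (ii)). What each approach buys: yours is self-contained and quantitative --- it nowhere uses Theorem~\ref{thm:transport}, so the proposition could even be placed before the transport relations, and it makes explicit the structural dichotomy (the decisive factor $q^{m_rn_s}$ in case (i) versus admissibility of the non-empty index in case (ii)) that the paper leaves implicit; the paper's proof is shorter given the machinery already built, but it silently relies on the transport identities for $Y$ being valid in $[0,+\infty]$ before convergence is known, a point justified by the termwise telescoping in the proof of Theorem~\ref{thm:transport} though not spelled out there.
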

\begin{proof}
Let $Y(\bk;\bl)=Z_q(\bk;\bl;0,0)$.
By the inequalities already used in this paper, we can check that
\[
Z(\bk;\bl)\leq\frac{1}{(1-q)(1-q\gamma)}\cdot\left(\frac{1}{(1-\xi)(1-\eta)}\right)^{r+s}Y(\bk;\bl),
\]
where $\gamma=\xi+\eta+(1-q)\xi\eta$.
Hence, it is sufficient to show the convergence of $Y(\bk;\bl)$ under the condition \eqref{item:non-empty} or \eqref{item:empty}.
If the condition \eqref{item:non-empty} is satisfied, then
\[
Y(\bk;\bl)=\cdots=Y((1);\boldsymbol{h})=Y(\varnothing;\boldsymbol{h}_{\uparrow})
\]
holds for some non-empty index $\boldsymbol{h}$ by the transport relations of $Y( \ ; \ )$.
Thus, $Y(\varnothing;\boldsymbol{l})$ for an empty or admissible index $\boldsymbol{l}$ is the only case to consider.
By definition, $Y(\varnothing;\boldsymbol{l})=\zeta_q(\boldsymbol{l})$ holds and this is convergent.
Here, we set $\zeta_q(\varnothing)\coloneqq 1$.
\end{proof}
\begin{corollary}[Transport relations for the double Ohno relation]\label{cor:transport}
\begin{align}
Z(\bk_{\to\uparrow};\bl)&=Z(\bk;\bl_{\to\uparrow}),\label{eq:TR3}\\
Z(\bk_{\uparrow\to};\bl)&=Z(\bk;\bl_{\uparrow\to})\quad (\bk,\bl\neq\varnothing).
\end{align}
\end{corollary}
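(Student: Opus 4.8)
The plan is to derive both correction-free identities purely formally from the two transport relations \eqref{eq:TR1} and \eqref{eq:TR2}, arranging the applications so that the $\xi\eta$-terms cancel in pairs. I would treat \eqref{eq:TR3} first and then bootstrap the second relation from it.

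For \eqref{eq:TR3} I would start from $Z(\bk_{\to\uparrow};\bl)$ and read $\bk_{\to\uparrow}=(\bk_{\to})_{\uparrow}$, so that \eqref{eq:TR2}, applied with first index $\bk_{\to}$ (which is never empty), gives $Z(\bk_{\to\uparrow};\bl)=Z(\bk_{\to};\bl_{\to})-\xi\eta\,Z(\bk_{\to\uparrow};\bl_{\to\uparrow})$. Next I would expand $Z(\bk_{\to};\bl_{\to})$ by \eqref{eq:TR1}, applied with second index $\bl_{\to}$ (again never empty), obtaining $Z(\bk_{\to};\bl_{\to})=Z(\bk;\bl_{\to\uparrow})+\xi\eta\,Z(\bk_{\to\uparrow};\bl_{\to\uparrow})$. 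Substituting, the two copies of $\xi\eta\,Z(\bk_{\to\uparrow};\bl_{\to\uparrow})$ cancel and leave exactly $Z(\bk_{\to\uparrow};\bl)=Z(\bk;\bl_{\to\uparrow})$. Since $\bk_{\to}$ and $\bl_{\to}$ are automatically non-empty, no restriction on $\bk,\bl$ is required, which matches the statement.

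For the second relation I would exploit the symmetry $Z(\bk;\bl)=Z(\bl;\bk)$ to put both sides into the same normal form. Writing $\bk_{\uparrow\to}=(\bk_{\uparrow})_{\to}$ and applying \eqref{eq:TR1}, legitimate because $\bl\neq\varnothing$, gives $Z(\bk_{\uparrow\to};\bl)=Z(\bk_{\uparrow};\bl_{\uparrow})+\xi\eta\,Z(\bk_{\uparrow\to\uparrow};\bl_{\uparrow})$. Symmetrically, since $Z(\bk;\bl_{\uparrow\to})=Z(\bl_{\uparrow\to};\bk)$, applying \eqref{eq:TR1} with first index $\bl_{\uparrow}$, legitimate because $\bk\neq\varnothing$, and re-applying the symmetry yields $Z(\bk;\bl_{\uparrow\to})=Z(\bk_{\uparrow};\bl_{\uparrow})+\xi\eta\,Z(\bk_{\uparrow};\bl_{\uparrow\to\uparrow})$. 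Subtracting these, the main terms $Z(\bk_{\uparrow};\bl_{\uparrow})$ agree and the claim reduces to $Z(\bk_{\uparrow\to\uparrow};\bl_{\uparrow})=Z(\bk_{\uparrow};\bl_{\uparrow\to\uparrow})$; but this is precisely \eqref{eq:TR3} applied to the indices $\bk_{\uparrow}$ and $\bl_{\uparrow}$, hence already proved. The hypotheses $\bk,\bl\neq\varnothing$ enter exactly here, so that $\bk_{\uparrow}$ and $\bl_{\uparrow}$ are meaningful.

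The computations are short, so the only genuine obstacle is bookkeeping: I must verify that each arrow composition is parsed correctly, so that the correction index $\bk_{\to\uparrow};\bl_{\to\uparrow}$ produced by \eqref{eq:TR2} is literally the one produced by \eqref{eq:TR1}, and that every invocation of \eqref{eq:TR1}/\eqref{eq:TR2} respects its non-emptiness hypothesis. I would also confirm, via Proposition~\ref{porp:conv}, that all of the connected sums appearing are finite, so that the cancellations are honest equalities of real numbers rather than formal manipulations of possibly divergent positive series.
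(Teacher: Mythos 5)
Your proposal is correct and matches the paper's own proof essentially step for step: \eqref{eq:TR3} via \eqref{eq:TR2} followed by \eqref{eq:TR1} with the $\xi\eta\cdot Z(\bk_{\to\uparrow};\bl_{\to\uparrow})$ terms cancelling, and the second relation via \eqref{eq:TR1} on both sides (your symmetric expansion of $Z(\bk;\bl_{\uparrow\to})$ is literally \eqref{eq:TR2}, which the paper derives from \eqref{eq:TR1} by symmetry) with the leftover terms cancelled by \eqref{eq:TR3}. Your closing remark about invoking Proposition~\ref{porp:conv} to justify cancelling the $\xi\eta$-terms as finite real numbers is a sound piece of care that the paper leaves implicit.
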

\begin{proof}
If we apply the transport relations in Theorem~\ref{thm:transport} twice, then the terms with $\xi\eta$ cancel each other out and vanish as follows!
\begin{align*}
Z(\bk_{\to\uparrow};\bl)&\stackrel{\eqref{eq:TR2}}{=}Z(\bk_{\to};\bl_{\to})-\xi\eta\cdot Z(\bk_{\to\uparrow};\bl_{\to\uparrow})\\
&\stackrel{\eqref{eq:TR1}}{=}Z(\bk;\bl_{\to\uparrow})+\xi\eta\cdot Z(\bk_{\to\uparrow};\bl_{\to\uparrow})-\xi\eta\cdot Z(\bk_{\to\uparrow};\bl_{\to\uparrow})\\
&=Z(\bk;\bl_{\to\uparrow}),
\end{align*}
\begin{align*}
Z(\bk_{\uparrow\to};\bl)&\stackrel{\eqref{eq:TR1}}{=}Z(\bk_{\uparrow};\bl_{\uparrow})+\xi\eta\cdot Z(\bk_{\uparrow\to\uparrow};\bl_{\uparrow})\\
&\stackrel{\eqref{eq:TR2}}{=}Z(\bk;\bl_{\uparrow\to})-\xi\eta\cdot Z(\bk_{\uparrow};\bl_{\uparrow\to\uparrow})+\xi\eta\cdot Z(\bk_{\uparrow\to\uparrow};\bl_{\uparrow})\\
&\stackrel{\eqref{eq:TR3}}=Z(\bk;\bl_{\uparrow\to}).\qedhere
\end{align*}
\end{proof}
\section{Consequences of transporting indices}
For a real number $\delta$, we denote by $\delta'$ the number $1+(1-q)\delta$.
Let $\xi, \eta$ be as in the previous section and let $\gamma$ be $\xi+\eta+(1-q)\xi\eta$.
Note that $\gamma'=\xi'\eta'$.
From now on, we use usual $q$-Pochhammer symbols $(a;q)_m$ and $(a;q)_{\infty}=\lim\limits_{m\to\infty}(a;q)_m$.
Since $(1-q)([i]-q^i\delta)=1-q^i\delta'$, $(1-q)^m[m;\delta]=(q\delta';q)_m$ holds.
\begin{proposition}[Initial value]\label{prop:initial}
For a non-empty index $\bk$, the following identity holds:
\[
Z(\bk;(1))=Z((1);\bk)=\frac{(q\xi';q)_{\infty}(q\eta';q)_{\infty}}{(q;q)_{\infty}(q\xi'\eta';q)_{\infty}}\cdot O(\bk_{\uparrow})<+\infty.
\]
\end{proposition}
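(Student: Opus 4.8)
The plan is to exploit the symmetry $Z(\bk;(1))=Z((1);\bk)$, which is immediate from $Z(\bk;\bl)=Z(\bl;\bk)$, and then to evaluate $Z(\bk;(1))$ by first carrying out the summation over the single variable $n=n_1$ attached to $\bl=(1)$. Since $\bk$ and $(1)$ are both non-empty, $Z(\bk;(1))$ is a convergent series of positive terms by Proposition~\ref{porp:conv}\,(i), so I may rearrange freely and sum over $n$ first. Writing $m=m_r$ and using $l_1=1$, which turns the last $\bl$-factor into $[n]/\bigl(([n]-q^n\xi)([n]-q^n\eta)\bigr)$, the series factors as
\[
Z(\bk;(1))=\sum_{0<m_1<\cdots<m_r}\Bigl(\prod_{i=1}^r\frac{q^{(k_i-1)m_i}}{([m_i]-q^{m_i}\xi)([m_i]-q^{m_i}\eta)[m_i]^{k_i-2}}\Bigr)\,S(m_r),
\]
where $S(m)\coloneqq\sum_{n\geq1}\frac{[n]}{([n]-q^n\xi)([n]-q^n\eta)}\,c(m,n)$. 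The key claim is $S(m)=C\,\frac{q^m}{[m]}$ with $C=\frac{(q\xi';q)_\infty(q\eta';q)_\infty}{(q;q)_\infty(q\xi'\eta';q)_\infty}$; granting this, multiplying the $m_r$-factor by $q^{m_r}/[m_r]$ raises the last entry of $\bk$ by one, so the bracket becomes exactly the summand of $O(\bk_{\uparrow})$ and $Z(\bk;(1))=C\cdot O(\bk_{\uparrow})$ (the factors for $i=1,\dots,r-1$ already coincide in the two sums).

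To prove the claim I would first simplify the summand of $S(m)$. Using $[n;\xi]/([n]-q^n\xi)=[n-1;\xi]$, the analogous identity for $\eta$, and $[n]/[n]!=1/[n-1]!$, the product $\frac{[n]}{(\cdots)}\,c(m,n)$ telescopes, and after shifting $j=n-1$ one is left with
\[
S(m)=\frac{q^m[m;\xi][m;\eta]}{[m]!}\sum_{j\geq0}\frac{q^{mj}[j;\xi][j;\eta]}{[j]!\,[m+j+1;\gamma]}.
\]
Next I would convert every bracket into a $q$-Pochhammer symbol via $(1-q)^a[a;\delta]=(q\delta';q)_a$ and $(1-q)^a[a]!=(q;q)_a$, and split $(q\gamma';q)_{m+j+1}=(q\gamma';q)_{m+1}(q^{m+2}\gamma';q)_j$. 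This rewrites the $j$-sum, up to an explicit prefactor, as the basic hypergeometric series ${}_2\phi_1(q\xi',q\eta';q^{m+2}\gamma';q,q^m)$.

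The crux is then the $q$-Gauss summation $_2\phi_1(a,b;c;q,c/ab)=\frac{(c/a;q)_\infty(c/b;q)_\infty}{(c;q)_\infty(c/ab;q)_\infty}$. The point to verify is the balancing condition: with $a=q\xi'$, $b=q\eta'$, $c=q^{m+2}\gamma'$ and $\gamma'=\xi'\eta'$ one gets $c/ab=q^{m+2}\xi'\eta'/(q^2\xi'\eta')=q^m$, which is precisely the argument of the series, so $q$-Gauss applies and produces infinite products with parameters $q^{m+1}\xi'$, $q^{m+1}\eta'$, $q^{m+2}\gamma'$, $q^m$. I expect the main work—and the main obstacle—to be the Pochhammer bookkeeping that follows: splitting each $(\cdot\,;q)_\infty=(\cdot\,;q)_m(\cdot\,q^m;q)_\infty$ and cancelling against the prefactor $[m;\xi][m;\eta]/[m]!$ and against $(q\gamma';q)_{m+1}$. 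After these cancellations the finite factors collapse to $1/[m]$ via $(q;q)_{m-1}/(q;q)_m=1/(1-q^m)$ and $(1-q)/(1-q^m)=1/[m]$, leaving exactly $S(m)=C\,q^m/[m]$. (The same computation, with the lower limit shifted, shows the convergence of the two series appearing in \eqref{eq:key}.) Finally, convergence of the right-hand side is clear because $\bk_{\uparrow}$ is admissible, whence $O(\bk_{\uparrow})\leq\bigl((1-\xi)(1-\eta)\bigr)^{-r}\zeta_q(\bk_{\uparrow})<+\infty$.
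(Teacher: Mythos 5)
Your proposal is correct and follows essentially the same route as the paper: summing over $n$ first, recognizing the inner sum (after converting brackets to $q$-Pochhammer symbols and using $\gamma'=\xi'\eta'$) as $\frac{(1-q)q^m(q\xi';q)_m(q\eta';q)_m}{(q;q)_m(q\xi'\eta';q)_{m+1}}\cdot{}_2\phi_1\left({q\xi',q\eta'\atop q^{m+2}\xi'\eta'};q,q^m\right)$, and applying the $q$-Gauss summation (the balancing check $c/ab=q^m$ you verify is exactly why it applies) to obtain $S(m)=c_{\xi,\eta}^{-1}\,q^m/[m]$, whence $Z(\bk;(1))$ collapses to the stated multiple of $O(\bk_\uparrow)$. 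You merely spell out the Pochhammer bookkeeping that the paper leaves as ``one can check,'' and your rearrangement step is harmless in any case since all terms are positive.
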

\begin{proof}
For positive integers $m,n$, we can check that the identity
\[
\sum_{n=1}^{\infty}\frac{[n]}{([n]-q^n\xi)([n]-q^n\eta)}\cdot c(m,n)=\frac{(1-q)q^m(q\xi';q)_m(q\eta';q)_m}{(q;q)_m(q\xi'\eta';q)_{m+1}}\cdot{}_2\phi_1\left({q\xi',q\eta' \atop q^{m+2}\xi'\eta'};q,q^m\right)
\]
holds by definition, where ${}_2\phi_1$ is the $q$-hypergeometric series.
By using Jacobi and Heine's $q$-Gauss summation formula for ${}_2\phi_1$, this value coincides with
\[
\frac{(1-q)q^m(q\xi';q)_m(q\eta';q)_m}{(q;q)_m(q\xi'\eta';q)_{m+1}}\cdot\frac{(q^{m+1}\xi';q)_{\infty}(q^{m+1}\eta';q)_{\infty}}{(q^{m+2}\xi'\eta';q)_{\infty}(q^m;q)_{\infty}}=\frac{q^m}{[m]}\cdot\frac{(q\xi';q)_{\infty}(q\eta';q)_{\infty}}{(q;q)_{\infty}(q\xi'\eta';q)_{\infty}}.
\]
Plugging this in for the definition of $Z(\bk;(1))$ yields the conclusion.
\end{proof}
As when we extend the domain of $O( \ )$, we identify a non-empty index $(k_1,\dots,k_r)$ and a word $yx^{k_1-1}\cdots yx^{k_r-1}\in y\bQ\langle x,y\rangle$ and understand $Z( \ ; \ )$ as a map from $(y\bQ\langle x,y\rangle)\times (y\bQ\langle x,y\rangle)$ to $\bR$ by the $\bQ$-bilinearity.
Notice that we don't substitute the empty index into $Z( \ ; \ )$ below.
Furthermore, for $w=\sum_{i=0}^{\infty}w_i\lambda^i\in (y\bQ\langle x,y\rangle)\jump{\lambda}$ and $w'=\sum_{j=0}^{\infty}w'_j\lambda^j\in(y\bQ\langle x,y\rangle)\jump{\lambda}$, we extend the definition of $Z( \ ; \ )$ by setting
\[
Z(w;w')\coloneqq\sum_{i=0}^{\infty}\sum_{j=0}^{\infty}Z(w_i,w'_j)\xi^{i+j}\eta^{i+j}
\]
as a possibly divergent series.
By this description, we rephrase the transport relations in Theorem \ref{thm:transport}.
Recall the definition of $\tau_{\lambda}$ and $A$ from the introduction.
We may assume $\varepsilon\coloneqq \xi\eta\cdot O((2))<1$ by the suitable choice of $\xi,\eta$.
\begin{lemma}\label{lem:rephrase}
Let $w, w'\in yA$ and $u\in A$.
\begin{enumerate}[(i)]
\item\label{item:1} $Z(w;w')$ is absolutely convergent.
\item\label{item:2} $Z(wu;w')=Z(w;w'\tau_{\lambda}(u))$.
\end{enumerate}
\end{lemma}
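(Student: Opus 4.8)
The plan is to prove (i) first and then deduce (ii), since the operator identity in (ii) rearranges infinite series in $\lambda$ (each power of $\lambda$ being read as one factor of $\xi\eta$, per the definition of the extended $Z( \ ; \ )$), and such rearrangement is legitimate only once absolute convergence is in hand. Throughout I use the $\bQ$-bilinearity of $Z( \ ; \ )$ and the positivity of $Z(\bk;\bl)$ for honest indices (Proposition~\ref{porp:conv}), together with the word dictionary $\bk_{\to}\leftrightarrow\bk y$ and $\bk_{\uparrow}\leftrightarrow\bk x$, under which \eqref{eq:TR1}, \eqref{eq:TR2}, and Corollary~\ref{cor:transport} become statements about right multiplication by $x$ and $y$.

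For (ii), since $\tau_{\lambda}$ is a $\bQ$-anti-automorphism, the set of $u\in A$ for which $Z(wu;w')=Z(w;w'\tau_{\lambda}(u))$ holds for all $w,w'\in yA$ is a $\bQ$-subring of $A$: if it holds for $u_1,u_2$, then $Z(wu_1u_2;w')=Z(wu_1;w'\tau_{\lambda}(u_2))=Z(w;w'\tau_{\lambda}(u_2)\tau_{\lambda}(u_1))=Z(w;w'\tau_{\lambda}(u_1u_2))$, the middle steps being justified by (i). It therefore suffices to treat the generators $\lambda,y,x,(1+yx\lambda)^{-1}$. For $u=\lambda$ both sides equal $\xi\eta\cdot Z(w;w')$. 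For $u=y$, relation \eqref{eq:TR1} gives $Z(w_iy;w'_j)=Z(w_i;w'_jx)+\xi\eta\cdot Z(w_iyx;w'_jx)$, and Corollary~\ref{cor:transport} rewrites the last factor as $Z(w_i;w'_jxyx)$; summing against $(\xi\eta)^{i+j}$ and reading the extra $\xi\eta$ as the $\lambda$ in $\tau_{\lambda}(y)=x+xyx\lambda$ yields $Z(wy;w')=Z(w;w'\tau_{\lambda}(y))$. For $u=x$, relation \eqref{eq:TR2} rearranges to $Z(wx;w'(1+yx\lambda))=Z(w;w'y)$, and substituting $w'\mapsto w'(1+yx\lambda)^{-1}$ (which again lies in $yA$) gives $Z(wx;w')=Z(w;w'(1+yx\lambda)^{-1}y)=Z(w;w'\tau_{\lambda}(x))$. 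Finally, for $u=(1+yx\lambda)^{-1}=:v$ the cases just proved give $Z(ws;w')=Z(w;w's)$ for $s:=1+yx\lambda$, since $\tau_{\lambda}(s)=1+\tau_{\lambda}(yx)\lambda=s$ (using $\tau_{\lambda}(yx)=yx$); substituting $w\mapsto wv$, $w'\mapsto w'v$ and using $vs=1$ then yields $Z(w;w'v)=Z(wv;w')$, which is the desired identity as $\tau_{\lambda}$ fixes $v$.

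For (i), I would dominate the target by a positive series. Writing $w=\sum_iw_i\lambda^i$ with $w_i\in y\bQ\langle x,y\rangle$, replacing every scalar coefficient by its absolute value and substituting $(1-yx\lambda)^{-1}$ for $(1+yx\lambda)^{-1}=\sum_{n\geq0}(-1)^n(yx)^n\lambda^n$ produces $\widehat w\in y\widehat A$ whose $\lambda$-coefficients dominate those of $w$ entrywise ($\widehat A$ generated by $x,y,\lambda,(1-yx\lambda)^{-1}$), so that $\sum_{i,j}\lvert Z(w_i;w'_j)\rvert(\xi\eta)^{i+j}\leq\sum_{i,j}Z(\widehat w_i;\widehat w'_j)(\xi\eta)^{i+j}$. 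Using the one-directional inequalities extracted from the positive identities \eqref{eq:TR1}, \eqref{eq:TR2} together with the symmetry $Z(\bk;\bl)=Z(\bl;\bk)$, one collapses one argument down to $(1)$, and Proposition~\ref{prop:initial} turns each resulting term into a constant multiple of a double Ohno sum $O( \ )$. The decisive estimate is that each box $yx$ forced by $(1-yx\lambda)^{-1}$ creates one extra summation level bounded by $O((2))$ (concretely $O(\bk_{\to\uparrow})\leq O((2))\cdot O(\bk)$, and more generally a level of height $\geq2$ is damped by $O((2))$), while the fixed skeleton coming from $w,w'$ contributes only a bounded constant; since each such box carries one $\lambda$, i.e.\ one factor $\xi\eta$, the contribution of $\lambda$-degree $k$ is at most a constant times $(\xi\eta\cdot O((2)))^k=\varepsilon^k$. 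Because the coefficient of $\lambda^k$ in a fixed element of $yA$ involves only polynomially many words in $k$ (their number is governed by the fixed number of factors $(1+yx\lambda)^{-1}$ appearing), the dominating series is bounded by $\sum_k\mathrm{poly}(k)\,\varepsilon^k$, which converges since $\varepsilon<1$.

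The main obstacle is exactly this convergence (i): the coefficients $w_i$ of a general element of $yA$ involve words of unbounded length through the geometric expansion of $(1+yx\lambda)^{-1}$, and $Z(\bk;\bl)$ itself depends on $\xi,\eta$, so there is no naive power-series-in-$\xi\eta$ bookkeeping. What makes everything work is the damping factor $O((2))$ per extra level, coupling each power of $\lambda$ to one factor $\xi\eta$ and producing the geometric ratio $\varepsilon<1$; the genuinely technical point is to make the domination rigorous, in particular to handle the finitely many height-one components of the fixed skeleton (for which $\prod S(k_i)$ diverges) by grouping each such component with its neighbouring level into an admissible block of bounded size, as in the proof of Proposition~\ref{porp:conv}. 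Once (i) is secured, the algebra of (ii) is a routine matter of matching the transport relations against $\tau_{\lambda}$ on the four generators.
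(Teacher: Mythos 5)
Your proposal is correct in substance and its skeleton matches the paper's: prove (i) by dominating with a positive series in which each $yx$ produced by expanding $(1\pm yx\lambda)^{-1}$ is damped by one factor $O((2))$ coupled to one factor $\xi\eta$, giving geometric ratio $\varepsilon<1$; then prove (ii) by reducing to generators via the transport relations. Two local differences are worth noting. First, in (ii) for $u=x$, the paper iterates \eqref{eq:TR2} into a finite telescoping sum and kills the remainder with the explicit bound $Z(wx;w'(yx)^{N+1})\leq Z(wx;w')\cdot O((2))^{N+1}$, whereas you rewrite \eqref{eq:TR2} as $Z(wx;w'(1+yx\lambda))=Z(w;w'y)$ (valid for all $w'\in yA$ by bilinearity plus (i)) and then substitute $w'\mapsto w'(1+yx\lambda)^{-1}$; this inversion trick is cleaner but leans entirely on the full strength of (i), while the paper's telescoping only needs it for words. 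Your explicit subring-of-valid-$u$ argument with the four generators $\lambda,y,x,(1+yx\lambda)^{-1}$ makes rigorous what the paper compresses into ``we may assume $w,w'$ are words and $u\in\{x,y\}$,'' and your handling of $(1+yx\lambda)^{-1}$ via $Z(ws;w')=Z(w;w's)$ for $s=1+yx\lambda$ is a nice touch the paper does not need to spell out. Second, in (i) you make the estimate harder than it is: after the dominated expansion, the ``skeleton'' term is exactly $Z(yu_1\cdots u_n;yu'_1\cdots u'_m)$ with both arguments non-empty words, and its finiteness is precisely Proposition~\ref{porp:conv}\,(\ref{item:non-empty}) --- no collapsing to $(1)$, no appeal to Proposition~\ref{prop:initial}, and no grouping of height-one levels is required (that grouping issue is dealt with once and for all inside the proof of Proposition~\ref{porp:conv}, via the comparison with $Y=Z_q(\ \!;\ \!;0,0)$ whose transport relations have no $\xi\eta$ terms, so the one-directional-inequality machinery you sketch for $Z$ itself, where the $\pm\xi\eta$ terms do not collapse cleanly, can be discarded). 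With that simplification your polynomial count $\sum_k\binom{k+n+m-1}{n+m-1}\varepsilon^k$ is just the paper's closed form $(1-\varepsilon)^{-(n+m)}$, and the two proofs of (i) coincide. One cosmetic slip: Proposition~\ref{porp:conv} asserts convergence, not positivity --- positivity of $Z(\bk;\bl)$ is immediate from its definition as a series of positive terms.
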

\begin{proof}
In order to prove \eqref{item:1}, it is enough to consider the case
\begin{align*}
w&=yu_1(1+yx\lambda)^{-1}u_2(1+yx\lambda)^{-1}\cdots u_n(1+yx\lambda)^{-1}u_{n+1},\\
w'&=yu'_1(1+yx\lambda)^{-1}u'_2(1+yx\lambda)^{-1}\cdots u'_m(1+yx\lambda)^{-1}u'_{m+1},
\end{align*}
where $u_1,\dots, u_n, u'_1,\dots, u'_m$ are words of $\bQ\langle x, y\rangle$.
In this case, we have, by definition,
\[
Z(w;w')=\sum_{i_1,\dots,i_{n+m}\geq 0}Z(w_{i_1,\dots,i_n};w'_{i_{n+1},\dots,i_{n+m}})\cdot(-\xi\eta)^{i_1+\cdots+i_{n+m}},
\]
where
\begin{align*}
&w_{i_1,\dots,i_n}=yu_1(yx)^{i_1}u_2(yx)^{i_2}\cdots u_n(yx)^{i_n}u_{n+1},\\
&w'_{i_{n+1},\dots,i_{n+m}}=yu'_1(yx)^{i_{n+1}}u'_2(yx)^{i_{n+2}}\cdots u'_m(yx)^{i_{n+m}}u_{m+1}.
\end{align*}
Since
\[
(0\leq) \ Z(w_{i_1,\dots,i_n};w'_{i_{n+1},\dots,i_{n+m}})\leq Z(yu_1u_2\cdots u_n; yu'_1u'_2\cdots u'_m)\cdot O((2))^{i_1+\cdots +i_{n+m}}
\]
holds, we have
\begin{align*}
&\sum_{i_1,\dots,i_{n+m}\geq 0}Z(w_{i_1,\dots,i_n};w'_{i_{n+1},\dots,i_{n+m}})\cdot(\xi\eta)^{i_1+\cdots+i_{n+m}}
\\
&\leq \sum_{i_1,\dots,i_{n+m}\geq 0}Z(yu_1u_2\cdots u_n; yu'_1u'_2\cdots u'_m)\cdot \varepsilon^{i_1+\cdots+i_{n+m}}\\
&=Z(yu_1u_2\cdots u_n; yu'_1u'_2\cdots u'_m)\cdot \left(\frac{1}{1-\varepsilon}\right)^{n+m},
\end{align*}
which proves \eqref{item:1}.

In order to prove \eqref{item:2}, thanks to \eqref{item:1}, we may assume without loss of generality that $w$ and $w'$ are words of $y\bQ\langle x,y\rangle$ and $u$ is either $x$ or $y$.
Under this assumption, we have, by the transport relations,
\[
Z(wy;w')\stackrel{\eqref{eq:TR1}}{=}Z(w;w'x)+\xi\eta\cdot Z(wyx;w'x) \stackrel{\eqref{eq:TR3}}{=}Z(w;w'(x(1+yx\lambda)))=Z(w;w'\tau_{\lambda}(y))
\]
and
\begin{align*}
Z(wx;w')&\stackrel{\eqref{eq:TR2}}{=}Z(w;w'y)-\xi\eta\cdot Z(wx;w'yx)\\
&=\cdots\\
&=\sum_{i=0}^{N}Z(w;w'(yx)^iy)\cdot(-\xi\eta)^i+Z(wx;w'(yx)^{N+1})\cdot(-\xi\eta)^{N+1}
\end{align*}
for every non-negative integer $N$.
Since
\[
Z(w;w'(yx)^iy)\leq Z(w;w'y)\cdot O((2))^i, \quad Z(wx;w'(yx)^{N+1})\leq Z(wx;w')\cdot O((2))^{N+1}
\]
and $\varepsilon=\xi\eta\cdot O((2))<1$, we obtain
\[
Z(wx;w')=\sum_{i=0}^{\infty}Z(w;w'(yx)^iy)\cdot(-\xi\eta)^i=Z(w;w'(1+yx\lambda)^{-1}y)=Z(w;w'\tau_{\lambda}(x))
\]
by letting $N$ tend to infinity.
\end{proof}
Now, we prove our main theorem.
Set $c_{\xi,\eta}\coloneqq\frac{(q;q)_{\infty}(q\xi'\eta';q)_{\infty}}{(q\xi';q)_{\infty}(q\eta';q)_{\infty}}$.
\begin{proof}[Proof of Theorem~$\ref{thm:main}$]
Let $w$ be an element of $A$.
By applying Proposition~\ref{prop:initial} and Lemma \ref{lem:rephrase}, we have the following equalities for absolutely convergent series:
\[
O(ywx)=c_{\xi,\eta}Z(yw;y)=c_{\xi,\eta}Z(y;y\tau_{\lambda}(w))=O(y\tau_{\lambda}(w)x).\qedhere
\]
\end{proof}
As already mentioned in the introduction, the double Ohno relation  (DOR) is a corollary of the extended DOR.
However we exhibit a direct proof of the original DOR below by transporting two arrows each time using Corollary~\ref{cor:transport}.
\begin{proof}[Proof of Theorem~$\ref{thm:genDOR}$]
Let $\bk$ be an index of BBBL-type:
\[
\bk=(\{2\}^{n_0},1,\{2\}^{n_1},3,\dots,\{2\}^{n_{2d-2}},1,\{2\}^{n_{2d-1}},3,\{2\}^{n_{2d}}).
\]
Then its dual is
\[
\bk^{\dagger}=(\{2\}^{n_{2d}},1,\{2\}^{n_{2d-1}},3,\dots,\{2\}^{n_{2}},1,\{2\}^{n_{1}},3,\{2\}^{n_{0}}).
\]
By Proposition~\ref{prop:initial}, the arrow notation, and the transport relations for the DOR, we can transport indices ``dynamically'' as follows:
\begin{align*}
O(\bk)&=O((1)_{\{\uparrow\to\}^{n_0}\{\to\uparrow\}^{n_1+1}\cdots \; \{\to\uparrow\}^{n_{2d-1}+1}\{\uparrow\to\}^{n_{2d}}\uparrow})\\
&=c_{\xi,\eta}\cdot Z((1)_{\{\uparrow\to\}^{n_0}\{\to\uparrow\}^{n_1+1}\cdots \; \{\to\uparrow\}^{n_{2d-1}+1}\{\uparrow\to\}^{n_{2d}}};(1))\\
&=c_{\xi,\eta}\cdot Z((1)_{\{\uparrow\to\}^{n_0}\{\to\uparrow\}^{n_1+1}\cdots \; \{\to\uparrow\}^{n_{2d-1}+1}\{\uparrow\to\}^{n_{2d}-1}};(1)_{\uparrow\to})\\
&=c_{\xi,\eta}\cdot Z((1)_{\{\uparrow\to\}^{n_0}\{\to\uparrow\}^{n_1+1}\cdots \; \{\to\uparrow\}^{n_{2d-1}+1}\{\uparrow\to\}^{n_{2d}-2}};(1)_{\uparrow\to\uparrow\to})\\
&=\cdots\\
&=c_{\xi,\eta}\cdot Z((1);(1)_{\{\uparrow\to\}^{n_{2d}}\{\to\uparrow\}^{n_{2d-1}+1}\cdots \; \{\to\uparrow\}^{n_{1}+1}\{\uparrow\to\}^{n_{0}}})\\
&=O((1)_{\{\uparrow\to\}^{n_{2d}}\{\to\uparrow\}^{n_{2d-1}+1}\cdots \; \{\to\uparrow\}^{n_{1}+1}\{\uparrow\to\}^{n_{0}}\uparrow})\\
&=O(\bk^{\dagger}),
\end{align*}
where $\{\uparrow\to\}^a$ means $\overbrace{\uparrow\to\cdots\uparrow\to}^{2a \ \text{arrows}}$ and so does $\{\to\uparrow\}^a$.
\end{proof}
\section{Another choice of parameters}
Our description of the definition of the connected sum/connector is a direct generalization of the one introduced by the third author and Yamamoto.
However, there appears a little complicated term $[m+n;\xi+\eta+(1-q)\xi\eta]$ in the definition of the connector $c(m,n)$.
In this section, we note another natural choice of parameters by which some formulas become slightly simplified.
Let $X, Y$ be non-negative real numbers satisfying
\[
\xi=\frac{q^{-X}-1}{1-q},\quad \eta=\frac{q^{-Y}-1}{1-q}.
\]
Note that $\xi'=q^{-X}$ and $\eta'=q^{-Y}$ by using the symbols in the previous section.
For a real number $a$ and a non-negative integer $n$, we set $\langle a\rangle\coloneqq 1-q^a$, $\langle a\rangle_n\coloneqq (q^{a};q)_n$.
Then, the generating function of the double Ohno sums and our connector are expressed as follows:
\[
O(\bk)=(1-q)^{k_1+\cdots+k_r}\sum_{0<m_1<\cdots<m_r}\prod_{i=1}^r\frac{q^{(k_i-1)m_i}}{\langle m_i-X \rangle\langle m_i-Y\rangle\langle m_i\rangle^{k_i-2}}
\]
and
\[
c(m,n)=\frac{q^{mn}\langle1-X\rangle_m\langle1-Y\rangle_m\langle1-X\rangle_n\langle1-Y\rangle_n}{\langle1\rangle_m\langle1\rangle_n\langle1-X-Y\rangle_{m+n}},
\]
where $\bk=(k_1,\dots,k_r)$ is an admissible index and $m, n$ are non-negative integers.

\end{document}